\def\@setcopyright{\@empty}
\newcommand{\arr}[2]{{{#1}_1,\dots,{#1}_{#2}}}
\newcommand{\Si}[1]{(1-#1^2)}
\newcommand{\Co}[1]{\cos^4\frac{#1}2}
\newcommand{\allp}{1\le p\le\infty}
\newcommand{\Px}[1]{P_{#1}^{(2,2)}}
\newcommand{\E}{E_n(f)_{p,\alpha}}
\newcommand{\Epar}[2]{E_{#1}\left(#2\right)_{p,\alpha}}
\newcommand{\T}[3]{\tau_{#1}^{#2}\left(#3\right)}
\newcommand{\hatT}[3]{\hat\tau_{#1}^{#2}\left(#3\right)}
\newcommand{\Dl}[3]{\Delta_{#1}^{#2}\left(#3\right)}
\newcommand{\w}{\hat\omega_r(f,\delta)_{p,\alpha}}
\newcommand{\wpar}[1]{\hat\omega_r\left(#1\right)_{p,\alpha}}
\newcommand{\norm}[1]{\left\|#1\right\|_{p,\alpha}}
\newcommand{\Py}[1]{P_{#1}^{(0,4)}}
\newcommand{\Lp}{L_{p,\alpha}}
\newcommand{\Copar}[2]{\left(\cos \frac{#1}2\right)^{4#2}}
\newcommand{\prn}[1]{\left(#1\right)}
\newcommand{\brc}[1]{\left\{#1\right\}}
\newcommand{\numericset}[1]{\mathbb #1}
\newcommand{\numN}{\numericset N}
\newtheorem{thm}{Theorem}[subsection]
\newtheorem{lmm}{Lemma}[subsection]
\newtheorem{cor}{Corollary}[subsection]
\newcounter{const}
\numberwithin{const}{thm}
\numberwithin{const}{lmm}
\newcommand{\Cn}[1][]{%
	\refstepcounter{const}C_{\theconst}%
  \@ifnotempty{#1}{%
    \newcounter{#1}\setcounter{#1}{\arabic{const}}}}
\newcommand{\lastC}{C_{\theconst}}
\newcommand{\prevC}[1][1]{%
	{\countdef\n=255
	 \n=\theconst
	 \advance\n by-#1
	 C_{\number\n}}}
\numberwithin{equation}{section}
\numberwithin{thm}{section}
\renewcommand{\theconst}{\arabic{const}}
\newsavebox\boxcst
\newcommand\boxconst[1]{\sbox\boxcst{$\Cn\label{#1}$}}
\newcommand\boxedconst{\usebox{\boxcst}}
\DeclareMathOperator*\esssup{ess\,sup}
\begin{document}

\title[A theorem of coincidence of classes]{%
	Theorem of coincidence of classes
	for a generalised shift operator%
}

\author{Nimete Sh.~Berisha}
\address{N.~Sh.\ Berisha\\
	Faculty of Mathematics and Sciences\\
	University of Prishtina\\
	N\"ena Terez\"e~5\\
	10000 Prishtin\"e\\
	Kosovo%
}
\email{nimete.berisha@gmail.com}

\keywords{Generalised modulus of smoothness,
	asymmetric generalised shift operator,
	theorem of coincidence of classes%
}
\subjclass{Primary 41A35, Secondary 41A50, 42A16.}
\date{}

\begin{abstract}
	In this paper,
	for a generalised shift operator introduced earlier,
	we prove theorem of coincidence of classes of functions
	defined by the order of best approximation
	by algebraical polynomials
	and the generalised Lipschitz classes
	defined by the generalised shift operator.
\end{abstract}

\maketitle

\section{Introduction}

In~\cite{p-berisha:anal-99,p-berisha:fundam-99},
a generalised shift operator was introduced,
by its means the generalised moduli of smoothness
of given order were defined,
and  Jackson's type theorem
was proved for these moduli.

In the present paper,
we prove the theorem of coincidence of classes of functions
defined by the order of best approximation by algebraic polynomials
and the generalised Lipschitz classes
defined by means of the generalised moduli of smoothness.

\section{Definitions}

Denote by~$L_p$, $1\le p<\infty$,
the set of functions~$f$
measurable in sense of Lebesgue
with summable $p$-th power,
by~$L_\infty$ the set of functions~$f$
bounded almost everywhere in $[-1,1]$,
and
\begin{displaymath}
	\|f\|_p=
	\begin{cases}
		\left(\int_{-1}^1|f(x)|^p\,dx\right)^{1/p},
			& \text{for $1\le p<\infty$},\\
		\esssup_{-1\le x\le1}|f(x)|,
			& \text{for $p=\infty$}.
	\end{cases}
\end{displaymath}

Denote by~$\Lp$ the set of functions~$f$
such that
$f(x)\*\Si{x}^\alpha\in L_p$,
and put
\begin{displaymath}
	\norm f=\|f(x)\Si{x}^\alpha\|_p.
\end{displaymath}

Denote by $\E$ the best approximation
of a function $f\in\Lp$
by algebraic polynomials of degree not greater than $n-1$,
in $\Lp$ metrics,
i.e.,
\begin{displaymath}
	\E=\inf_{P_n}\norm{f-P_n},
\end{displaymath}
where~$P_n$ is an algebraic polynomial
of degree not greater than $n-1$.

By~$E(p,\alpha,\lambda)$
we denote the class of functions $f\in\Lp$
satisfying the condition
\begin{displaymath}
	\E\le Cn^{-\lambda},
\end{displaymath}
where $\lambda>0$
and~$C$ is a constant
not depending on~$n$ $(n\in\numN)$.

Define generalised shift operator
$\hatT t{}{f,x}$
by
\begin{displaymath}
	\hatT t{}{f,x}=\frac1 {\pi\Si x\Co t}
		\int_0^\pi B_{\cos t}(x,\cos\varphi,R)
			f(R)\,d\varphi,
\end{displaymath}
where
\begin{align}\label{eq:R-B}
	R 			&=x\cos t-\sqrt{1-x^2}\sin t\cos\varphi, \notag\\
	B_y(x,z,R) 	&=2\Bigl(\sqrt{1-x^2}y+xz\sqrt{1-y^2}\\
				& \quad
		+\sqrt{1-x^2}(1-y)\Si z\Bigr)^2-\Si R. \notag
\end{align}

By means of this generalised shift operator
define the $r$-th generalised difference by
\begin{align*}     
	\Dl t1{f,x} &=\Dl t{}{f,x}=\hatT t{}{f,x}-f(x),\\
	\Dl{\arr t r}r{f,x} &=\Dl{t_r}{}{\Dl{\arr t{r-1}}{r-1}{f,x},x}
		\quad (r=2,3,\dots),
\end{align*}
and for a function $f\in\Lp$,
define the $r$-th generalised modulus of smoothness
as follows
\begin{displaymath}
	\w=\sup_{\substack{|t_j|\le\delta\\j=1,2,\dots,r}}
		\norm{\Dl{\arr t r}r{f,x}}
	\quad(r=1,2,\dotsc).
\end{displaymath}

Consider the class $H(p,\alpha,r,\lambda)$
of functions $f\in\Lp$
satisfying the condition
\begin{displaymath}
	\w\le C\delta^\lambda,
\end{displaymath}
where $\lambda>0$
and~$C$ is a constant
not depending on~$\delta$.

Put $y=\cos t$, $z=\cos\varphi$ in the operator $\hatT t{}{f,x}$,
denote it by~$\T y{}{f,x}$
and rewrite it in the form
\begin{displaymath}
	\T y{}{f,x}=\frac4{\pi\Si x(1+y)^2}
		\int_{-1}^1 B_y(x,z,R)f(R)\frac{dz}{\sqrt{1-z^2}},
\end{displaymath}
where~$R$ and $B_y(x,z,R)$ are defined in~\eqref{eq:R-B}.

Define the $r$-th power of the generalised shift operator
by
\begin{align*}
	\T{y}1{f,x} &=\T y{}{f,x},\\
	\T{\arr y r}r{f,x} &=\T{y_r}{}{\T{\arr t{r-1}}{r-1}{f,x},x}
		\quad (r=2,3,\dots).
\end{align*}

By~$P_\nu^{(\alpha,\beta)}(x)$ $(\nu=0,1,\dotsc)$
we denote the Jacobi polynomials, 
i.e., the algebraic polynomials of degree~$\nu$,
orthogonal with the weight function
$(1-x)^{\alpha}(1+x)^{\beta}$
on the segment~$[-1,1]$,
and normed by the condition
\begin{displaymath}
	P_\nu^{(\alpha,\beta)}(1)=1 \quad(\nu=0,1,\dotsc).
\end{displaymath}

Denote by~$a_n(f)$ the Fourier--Jacobi coefficients
of a function~$f$,
integrable with the weight function $\Si{x}^2$
on the segment~$[-1,1]$,
with respect to the system of Jacobi polynomials
$\brc{\Px n(x)}_{n=0}^\infty$,
i.e.,
\begin{displaymath}
	a_n(f)=\int_{-1}^1f(x)\Px n(x)\Si{x}^2\,dx\quad(n=0,1,\dotsc).
\end{displaymath}

\section{Auxiliary statements}

In order to prove our results
we need the following theorem.

\begin{thm}\label{th:jackson}
	Let the numbers~$p$ and~$\alpha$ be such that $\allp$;
	\begin{alignat*}2
		1/2 			   &<\alpha\le1
		  &\quad &\text{for $p=1$},\\
		1-\frac1{2p} &<\alpha<\frac32-\frac1{2p}
		  &\quad &\text{for $1<p<\infty$},\\
		1 				   &\le\alpha<3/2
		  &\quad &\text{for $p=\infty$}.
	\end{alignat*}
	If $f\in\Lp$,
	then for every natural number~$n$
	\begin{displaymath}
		\Cn\E\le\wpar{f,1/n},
	\end{displaymath}
	where the positive constant~$\lastC$
	does not depend on~$f$ and~$n$.
\end{thm}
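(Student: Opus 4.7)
The plan is to follow the classical Jackson strategy, adapted to the generalised shift $\hat\tau_t$: construct an explicit near-best polynomial approximant $J_n f$ of degree $\le n-1$ from iterated averages of $\hat\tau_t$ over the cube $|t_j|\le 1/n$, and then express the error $f - J_n f$ as a weighted $r$-fold average of the generalised differences $\Dl{\arr t r}r{f,\cdot}$ whose $\Lp$-norm is controlled by $\wpar{f,1/n}$.

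First I would exploit the fact, built into the definition of $\hat\tau_t$ in~\cite{p-berisha:anal-99,p-berisha:fundam-99}, that the Jacobi polynomials $\Px\nu$ are eigenfunctions of $\hat\tau_t$, with an eigenvalue sequence $\lambda_\nu(t)$ satisfying $\lambda_\nu(0)=1$ and $|1-\lambda_\nu(t)|$ of order $\nu^2 t^2$ for $\nu|t|$ bounded. Consequently
\begin{displaymath}
\Dl{\arr t r}r{\Px\nu,x}=\Px\nu(x)\prod_{j=1}^r\prn{\lambda_\nu(t_j)-1},
\end{displaymath}
so that on the formal Fourier--Jacobi expansion $f\sim\sum a_\nu(f)\Px\nu$ both $\hat\tau_t$ and the higher difference $\Delta^r$ act as multiplier operators.

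Next I would construct a Jackson--de la Vall\'ee Poussin type kernel $K_n(t_1,\dots,t_r)$ supported in $[-1/n,1/n]^r$ such that the associated multiplier
\begin{displaymath}
m_\nu=1-\int K_n(t_1,\dots,t_r)\prod_{j=1}^r\prn{\lambda_\nu(t_j)-1}\,dt_1\cdots dt_r
\end{displaymath}
vanishes for every $\nu\ge n$ and is uniformly bounded for $\nu<n$. The operator $J_n f=\sum_{\nu<n}m_\nu a_\nu(f)\Px\nu$ is then an algebraic polynomial of degree less than $n$, giving $E_n(f)_{p,\alpha}\le\norm{f-J_n f}$, while $f-J_n f$ coincides with the $r$-fold $K_n$-average of $\Dl{\arr t r}r{f,\cdot}$. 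Taking $\Lp$-norms and invoking the uniform $\Lp$-boundedness of $\hat\tau_t$ (and hence of $\Delta^r$) established in the earlier works, the right-hand side is dominated by a constant multiple of
\begin{displaymath}
\sup_{|t_j|\le 1/n}\norm{\Dl{\arr t r}r{f,\cdot}}=\wpar{f,1/n},
\end{displaymath}
which yields the claimed inequality.

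The main obstacle is the joint design of the kernel $K_n$: it must simultaneously annihilate all Fourier--Jacobi modes $\nu\ge n$ and define a uniformly $\Lp$-bounded summation operator. This is precisely where the delicate restrictions on $\alpha$ relative to~$p$ enter the hypothesis, since the stated intervals are exactly those on which the relevant partial-sum and Ces\`aro-type Fourier--Jacobi multipliers associated with the weight $\Si{x}^2$ and the measure $\Si{x}^\alpha$ are uniformly bounded on $\Lp$; outside these intervals the kernel construction breaks down and the Jackson estimate need not hold.
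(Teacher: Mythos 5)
First, a point of comparison: the paper does not prove Theorem~\ref{th:jackson} at all --- it is quoted as known from~\cite{p-berisha:fundam-99} --- so your outline has to stand on its own, and it does not, because the object on which it entirely rests is not constructed and in fact cannot exist as described. You require a kernel $K_n$ supported in $[-1/n,1/n]^r$ (and necessarily with $\int|K_n|$ under control, since your last step bounds the $K_n$-average of $\Dl{\arr t r}r{f,x}$ by the supremum $\wpar{f,1/n}$) such that $m_\nu=0$ for \emph{every} $\nu\ge n$. Already for $r=1$ this is impossible. Indeed, for $r=1$ your conditions read $\int K_n(t)\bigl(\Py\nu(\cos t)-1\bigr)\,dt=1$ for all $\nu\ge n$, i.e.\ $\int K_n(t)\Py\nu(\cos t)\,dt=c$ with $c=1+\int K_n(t)\,dt$ independent of $\nu$. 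Transport $K_n(t)\,dt$ to a finite signed measure $\mu$ in $y=\cos t$, supported on $[\cos(1/n),1]$, and put $\tilde\mu=\mu-c\,\delta_1$; since $\Py\nu(1)=1$, all coefficients $\int\Py\nu\,d\tilde\mu$ with $\nu\ge n$ vanish. Because the polynomials $\Py\nu$, $\nu<n$, are orthogonal with respect to the weight $(1+y)^4$, one can choose a polynomial $Q$ of degree $<n$ matching the remaining finitely many coefficients; then the measure $d\tilde\mu-Q(y)(1+y)^4\,dy$ annihilates every $\Py\nu$, hence every polynomial, hence (Weierstrass density plus Riesz representation) it is the zero measure. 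But $Q(y)(1+y)^4\,dy$ has support all of $[-1,1]$ while $\tilde\mu$ lives on $[\cos(1/n),1]$, so $Q\equiv0$ and $\mu=c\,\delta_1$, which gives $m_\nu\equiv1$ rather than $0$. So exact annihilation of all high Fourier--Jacobi modes by a kernel confined to the cube of side $2/n$ is unattainable, and for $r\ge2$ you give no indication how it could be achieved; the whole difficulty of the Jackson theorem has been deferred to a nonexistent (or at best unjustified) kernel.

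The workable version of your idea is genuinely different: one takes the averaging kernel to be a \emph{polynomial} of degree $<n$ in $y=\cos t$ (a Jackson-type kernel), so that property~\ref{it:properties-tau-6} of Lemma~\ref{lm:properties-tau} and orthogonality make the averaged operator automatically an algebraic polynomial of degree $<n$; such a kernel is not supported in $[-1/n,1/n]$ but only concentrated there, and one must then supply uniform $L^1$ bounds, moment estimates of the type $\int t^{2r}K_n(t)\,dt\le Cn^{-2r}$, and a comparison of $\norm{\Dl{\arr t r}r{f,x}}$ for $|t_j|>1/n$ with $\wpar{f,1/n}$ (a doubling-type property of the generalised modulus) --- none of which appears in your sketch. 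Two further inaccuracies: the passage from the formal multiplier identities to $f\in\Lp$ for general $p$ needs the density/boundedness argument you omit, and the restrictions on $\alpha$ enter through the $\Lp$-boundedness of $\hatT t{}{f,x}$ (exactly the hypotheses of Lemma~\ref{lm:bound-tau}), not through uniform boundedness of partial-sum or Ces\`aro-type Fourier--Jacobi multipliers as you assert.
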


Theorem~\ref{th:jackson}
was proved in~\cite{p-berisha:fundam-99}.
It is known as a Jackson's type theorem.

We also need the following lemmas.

\begin{lmm}\label{lm:properties-tau}
	The operator~$\T y{}{f,x}$ has the following properties:
	\begin{enumerate}
	\item\label{it:properties-tau-1}
		it is linear,
	\item\label{it:properties-tau-2}
		$\T1{}{f,x}=f(x)$,
	\item\label{it:properties-tau-3}
		$\T y{}{\Px\nu,x}=\Px\nu(x)\Py\nu(y)
		\quad(\nu=0,1,\dotsc)$,
	\item\label{it:properties-tau-4}
		$\T y{}{1,x}=1$,
	\item\label{it:properties-tau-6}
		$a_n(\T y{}{f,x})=a_n(f)\Py n(y)
			\quad(n=0,1,\dotsc)$.
	\end{enumerate}
\end{lmm}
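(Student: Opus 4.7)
Property~\ref{it:properties-tau-1} is immediate from the integral representation of $\T y{}{f,x}$, since the kernel $B_y(x,z,R)/[\pi\Si x(1+y)^2]$ is independent of $f$ and $R$ depends only on $(x,y,\varphi)$; linearity propagates to the iterates $\T{\arr y r}r{f,x}$ by induction on the recursive definition.

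For property~\ref{it:properties-tau-2}, I would substitute $y=1$ in the integral representation. By~\eqref{eq:R-B} one has $R=x$, and a direct calculation using the first formula in~\eqref{eq:R-B} gives $B_1(x,z,x)=2\Si{x}-\Si{x}=\Si{x}$; the prefactor collapses to $1/[\pi\Si x]$, the weight cancels against $B_1$, and $\int_{-1}^1 dz/\sqrt{1-z^2}=\pi$ yields $\T 1{}{f,x}=f(x)$.

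Property~\ref{it:properties-tau-3} is the substantive step and the principal obstacle; it amounts to a product formula for Jacobi polynomials with the asymmetric parameters $(2,2)$ and $(0,4)$. My approach is to insert $f(R)=\Px\nu(R)$ and, via the parametrisation $R=xy-\sqrt{1-x^2}\sqrt{1-y^2}\cos\varphi$, expand the square in $B_y(x,z,R)$ so that the integrand becomes a polynomial in $\cos\varphi$ that can be integrated term by term against $dz/\sqrt{1-z^2}=-d\varphi$. One then identifies the resulting $\varphi$-integral with the classical product formula for $\Px\nu(x)\Py\nu(y)$, recognising $B_y(x,\cos\varphi,R)/[\pi\Si x(1+y)^2\sin\varphi]$ as the associated kernel. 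This combinatorial matching is precisely the identity for which the operator $\T y{}{\cdot}$ was designed in~\cite{p-berisha:anal-99,p-berisha:fundam-99}, and it is the one ingredient for which I would appeal to (or reproduce) the original computation rather than a short direct argument.

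Properties~\ref{it:properties-tau-4} and~\ref{it:properties-tau-6} follow from~\ref{it:properties-tau-3}. Property~\ref{it:properties-tau-4} is the $\nu=0$ specialisation, since $\Px 0\equiv\Py 0\equiv 1$. For~\ref{it:properties-tau-6}, I would expand $f\in\Lp$ in its Fourier--Jacobi series and use linearity together with~\ref{it:properties-tau-3}: each basis term contributes a factor $\Py k(y)$ to its image under $\T y{}{\cdot}$, so reading off the $n$-th Fourier--Jacobi coefficient gives $a_n(\T y{}{f,x})=a_n(f)\Py n(y)$. To avoid convergence questions, an equivalent route is to multiply the integral representation of $\T y{}{f,x}$ by $\Px n(x)\Si{x}^2$, integrate in $x$, swap the order of integration by Fubini, and apply~\ref{it:properties-tau-3} inside the integrand after recognising the symmetry of $B_y$ in its first and third arguments.
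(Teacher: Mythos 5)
The paper offers no proof of this lemma at all: it simply records that it was established in \cite{p-berisha:anal-99}, so there is no in-paper argument to compare yours with step by step. Your reconstruction of the elementary parts is correct and checkable from the definitions given here: linearity is immediate from the integral representation; for $y=1$ one has $R=x$, $B_1(x,z,x)=2\Si x-\Si x=\Si x$, the prefactor $4/[\pi\Si x(1+1)^2]$ collapses to $1/[\pi\Si x]$, and $\int_{-1}^1 dz/\sqrt{1-z^2}=\pi$ gives $\T1{}{f,x}=f(x)$ (your pointer to ``the first formula in~\eqref{eq:R-B}'' should be to the second, the formula for $B_y$, but that is a slip of reference, not of substance); and property~\ref{it:properties-tau-4} is indeed the case $\nu=0$ of property~\ref{it:properties-tau-3}, since $\Px0\equiv\Py0\equiv1$. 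For the substantive item, the product formula $\T y{}{\Px\nu,x}=\Px\nu(x)\Py\nu(y)$, you defer to the computation of \cite{p-berisha:anal-99,p-berisha:fundam-99}; that is exactly what the present paper does, so no objection, though your sketch of expanding the square and ``matching the classical product formula'' is a plan rather than an argument.

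The one genuine soft spot is the last item, $a_n(\T y{}{f,x})=a_n(f)\Py n(y)$. Termwise application of property~\ref{it:properties-tau-3} to the Fourier--Jacobi series of $f$ needs a convergence justification in $\Lp$ which you do not supply, and your fallback --- Fubini together with ``the symmetry of $B_y$ in its first and third arguments'' --- invokes a symmetry that is not visible in~\eqref{eq:R-B}: $B_y(x,z,R)$ is not symmetric under exchanging $x$ and $R$, and the self-adjointness of $\T y{}{\cdot,\cdot}$ with respect to the weight $\Si x^2$ is itself something that would have to be proved, not read off. A cleaner route using only tools already in this paper: both sides of the identity are linear in $f$, they agree on all polynomials by linearity and property~\ref{it:properties-tau-3}, and both are continuous on $\Lp$ --- the functional $f\mapsto a_n(f)$ is bounded on $\Lp$ because $\Si x^{2-\alpha}$ is bounded for the admissible~$\alpha$, and $f\mapsto\T y{}{f,x}$ is bounded by Lemma~\ref{lm:bound-tau} for fixed $y\neq-1$ --- so density of polynomials in $\Lp$ for $1\le p<\infty$ finishes the proof (for $p=\infty$ one can note that $\Lp$ embeds into an admissible $L_{1,\alpha'}$ and that the coefficients $a_n$ do not depend on the ambient space). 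With that repair your outline is sound and is consistent with the paper's own practice of delegating the product formula to \cite{p-berisha:anal-99}.
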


Lemma~\ref{lm:properties-tau}
was proved in~\cite{p-berisha:anal-99}

\begin{lmm}\label{lm:bound-tau}
	Let the numbers~$p$ and~$\alpha$ be such that $\allp$;
	\begin{alignat*}2
		1/2				&<\alpha\le1
		  &\quad &\text{for $p=1$},\\
		1-\frac1{2p}	&<\alpha<\frac32-\frac1{2p}
		  &\quad &\text{for $1<p<\infty$},\\
		1				&\le\alpha<3/2
		  &\quad &\text{for $p=\infty$}.
	\end{alignat*}
	If $f\in\Lp$,
	then
	\begin{displaymath}
		\norm{\hatT t{}{f,x}}\le\frac C{\Co t}\norm f,
	\end{displaymath}
	where constant~$C$ does not depend on~$f$ and~$t$.
\end{lmm}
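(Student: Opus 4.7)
My plan is to reduce the weighted estimate to the two endpoint cases $p=1$ and $p=\infty$ and then interpolate. Passing first to $y=\cos t$ (so that $1/\Co t=4/(1+y)^2$), I rewrite the operator as $\T y{}{f,x}$. The central starting identity is Lemma~\ref{lm:properties-tau}(\ref{it:properties-tau-4}) applied to $f\equiv1$:
\[
\int_{-1}^{1}B_y(x,z,R)\,\frac{dz}{\sqrt{1-z^2}}=\frac{\pi\Si x(1+y)^2}{4}.
\]
Note that $B_y$ is not non-negative in general --- at $x=1$ one computes $B_y(1,z,y)=(1-y^2)(2z^2-1)$, which changes sign in $z$ --- so this identity succeeds through cancellation, not positivity, and $\T y$ cannot be treated as a positive averaging operator.

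For the endpoint $p=\infty$, the pointwise bound $|f(R)|\le\norm f\,(1-R^2)^{-\alpha}$ reduces the task to estimating $\Si x^{\alpha-1}\int|B_y(x,z,R)|(1-R^2)^{-\alpha}\,dz/\sqrt{1-z^2}$ uniformly in $x$ and $y$; the angular parametrisation $x=\cos\theta$, $R=\cos\psi$, under which $R=\cos\theta\cos t-\sin\theta\sin t\cos\varphi$ is the spherical law of cosines, together with the structural form $B_y=2A^2-(1-R^2)$ (with $A$ as in the definition of $B_y$), renders this estimate tractable, and the upper restriction $\alpha<3/2$ is precisely what ensures convergence of the resulting angular integral. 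For the endpoint $p=1$, Fubini reduces the claim, for each fixed $z$, to controlling $\int\Si x^{\alpha-1}|B_y(x,z,R(x,z))|\,|f(R(x,z))|\,dx$; substituting $x\to R$ via the spherical chart (where the map is linear in the angular variable with elementary Jacobian) and verifying $\Si x^{\alpha-1}|B_y||dx/dR|\le C(1+y)^2(1-R^2)^{\alpha-1}$ yields the claimed estimate, with the lower restriction $\alpha>1/2$ ensuring integrability near the endpoints. Finally, Riesz--Thorin interpolation on $f\mapsto\T y{}{f,\cdot}$ between these two endpoint bounds produces precisely the intermediate range $1-1/(2p)<\alpha<3/2-1/(2p)$ stated in the lemma.

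The principal obstacle will be the careful estimation of $|B_y|$ together with the handling of its changing sign: $|B_y|$ does not admit the naive bound $C\,\Si x(1+y)^2$ uniformly in $z$ (as the $x=1$ case above already shows), yet the weighted integral $\int|B_y|(1-R^2)^{-\alpha}\,dz/\sqrt{1-z^2}$ still has the right size in $\Si x$ thanks to the algebraic structure $B_y=2A^2-(1-R^2)$ combined with the spherical form of $1-R^2$. The angular parametrisation $x=\cos\theta$, $R=\cos\psi$ is what makes this geometry transparent and also makes the change of variables in the $p=1$ case tractable, since there the map $z\mapsto R$ is linear in $z$ and the Jacobian is elementary; it is at this estimation step that the precise constraints on $\alpha$ come out.
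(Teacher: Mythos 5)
The paper itself contains no proof of Lemma~\ref{lm:bound-tau}; it is quoted from \cite{p-berisha:anal-99}, so there is no internal argument to compare yours with, and your proposal must stand on its own. It does not: it is a strategy outline in which the entire analytic content of the lemma is asserted rather than proved. For $p=\infty$ you reduce matters to showing that $\Si x^{\alpha-1}\int_{-1}^1|B_y(x,z,R)|(1-R^2)^{-\alpha}\,dz/\sqrt{1-z^2}\le C(1+y)^2$ uniformly in $x$ and $y$, and for $p=1$ you assert the pointwise bound $\Si x^{\alpha-1}|B_y|\,|dx/dR|\le C(1+y)^2(1-R^2)^{\alpha-1}$; neither is verified, and these estimates are exactly where the hypotheses on $\alpha$ must be used (as written, your argument never uses them). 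The $p=1$ assertion is moreover doubtful as stated: with $x=\cos\theta$ one has $R=\rho\cos(\theta+\phi_1)$ where $\rho^2=\cos^2t+\sin^2t\cos^2\varphi\le1$, so $|dx/dR|=\sin\theta/(\rho|\sin(\theta+\phi_1)|)$ blows up at the interior points $R=\pm\rho$, where $\sin\theta$ need not vanish; whether $|B_y|$ degenerates there fast enough to give a pointwise bound, or whether one must instead settle for an integrated bound across the square-root singularity, is precisely the computation you omit. Your own (correct) observation that $B_y(1,z,y)=(1-y^2)(2z^2-1)$ changes sign, so that no naive uniform bound on $|B_y|$ is available, shows these verifications cannot be waved through; the identity $\int_{-1}^1 B_y\,dz/\sqrt{1-z^2}=\pi\Si x(1+y)^2/4$ coming from $\T y{}{1,x}=1$ exploits cancellation and gives no control over $\int|B_y|(1-R^2)^{-\alpha}\,dz/\sqrt{1-z^2}$.

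A second, smaller defect: the two endpoint bounds live in differently weighted spaces (weight exponent $\alpha_1\in(1/2,1]$ at $p=1$, $\alpha_\infty\in[1,3/2)$ at $p=\infty$), so plain Riesz--Thorin does not apply; interpolating the same operator between $L_1$ and $L_\infty$ with a fixed weight only recovers $\alpha=1$ for intermediate $p$. To obtain the full band $1-1/(2p)<\alpha<3/2-1/(2p)$ you need interpolation with change of measures (Stein--Weiss) or Stein's analytic-family theorem applied to $g\mapsto\Si x^{\beta}\hatT t{}{(1-u^2)^{-\beta}g(u),x}$ with $\beta$ affine in the interpolation parameter. That step is standard and fixable, and the overall architecture (pass to $y=\cos t$, treat the endpoints $p=1,\infty$ in angular variables, interpolate) is reasonable; but until the endpoint kernel estimates are actually carried out, the proposal does not constitute a proof.
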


Lemma~\ref{lm:bound-tau} was proved in~\cite{p-berisha:anal-99}.

\begin{cor}\label{cr:bound-tau}
	Let be given numbers~$p$, $\alpha$ and~$r$
	such that $\allp$,
	$r\in\numN$;
	\begin{alignat*}2
		\frac12			&<\alpha\le1 
		&\quad &\text{for $p=1$},\\
		1-\frac1{2p}	&<\alpha<\frac32-\frac1{2p} 
		&\quad &\text{for $1<p<\infty$},\\
		1				&\le\alpha<\frac32
		&\quad &\text{for $p=\infty$}.
	\end{alignat*}
	Let $f\in\Lp$.
	The following inequality holds true
	\begin{displaymath}
		\norm{\Dl{\arr t r}r{f,x}}
		\le\frac C{\prod_{j=1}^r\Copar{t_j}{}}\norm f,
	\end{displaymath}
	where constant~$C$
	does not depend on~$f$ and~$t_j$ $(j=1,2,\dots,r)$.
\end{cor}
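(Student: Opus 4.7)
The plan is to prove the corollary by straightforward induction on $r$, with Lemma~\ref{lm:bound-tau} serving as the only analytic input.

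For the base case $r=1$, I would apply the triangle inequality to $\Dl{t_1}{}{f,x}=\hatT{t_1}{}{f,x}-f(x)$ and then invoke Lemma~\ref{lm:bound-tau}, obtaining
\begin{displaymath}
\norm{\Dl{t_1}{}{f,x}} \le \norm{\hatT{t_1}{}{f,x}} + \norm{f} \le \frac{C}{\Co{t_1}}\norm{f} + \norm{f}.
\end{displaymath}
Since $\cos^4(t_1/2)\le 1$ implies $\norm{f}\le\frac{1}{\Co{t_1}}\norm{f}$, the two summands combine into $\frac{C+1}{\Co{t_1}}\norm{f}$, which is the desired estimate for $r=1$.

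For the inductive step, assume the claim holds for $r-1$, and set $g(x):=\Dl{\arr t{r-1}}{r-1}{f,x}$. The inductive hypothesis gives $g\in\Lp$ together with the bound
\begin{displaymath}
\norm{g}\le\frac{C_{r-1}}{\prod_{j=1}^{r-1}\Copar{t_j}{}}\norm{f}.
\end{displaymath}
The recursive definition of the higher-order difference yields $\Dl{\arr t r}r{f,x}=\Dl{t_r}{}{g,x}$, so applying the $r=1$ case to $g$ in place of $f$, with parameter $t_r$, supplies the missing factor $1/\Copar{t_r}{}$ and completes the induction with an overall constant of the form $C_{r-1}(C+1)$.

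The only point to verify---hardly a genuine obstacle---is that the hypotheses on $p$ and $\alpha$ in the corollary coincide exactly with those of Lemma~\ref{lm:bound-tau}, and that the intermediate function $g$ at each stage lies in the same space $\Lp$ as $f$. Both facts are immediate from the linearity of $\hatT{t}{}{\cdot,x}$ and the definition of the $\norm{\cdot}$ norm, so the lemma is reapplicable at every stage of the induction without adjusting the admissible range of parameters.
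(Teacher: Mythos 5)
Your proof is correct, and it rests on exactly the same analytic input as the paper's (Lemma~\ref{lm:bound-tau} together with the trivial bound $\Co{t}\le1$), but it is organised along a different decomposition. The paper first establishes, by induction, the explicit representation of $\Dl{\arr t r}r{f,x}$ as an alternating sum of iterated shifts $\hatT{t_{i_1},\dots,t_{i_k}}{k}{f,x}$ over all subsets $\{i_1<\dots<i_k\}\subseteq\{1,\dots,r\}$ plus the term $(-1)^rf(x)$ (the analogue of the binomial expansion of an ordinary finite difference), and then bounds each of the $2^r$ terms by applying Lemma~\ref{lm:bound-tau} up to $r$ times, padding any missing factors with $\Co{t_j}\le1$. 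You instead induct directly on the norm inequality through the recursion $\Dl{\arr t r}r{f,x}=\Dl{t_r}{}{g,x}$ with $g=\Dl{\arr t{r-1}}{r-1}{f,x}$, applying your $r=1$ estimate (constant $C+1$) at each stage. Your route is somewhat more economical: it avoids stating and proving the expansion formula and gives the constant explicitly as $(C+1)^r$; the paper's route has the side benefit of exhibiting the higher-order difference as a linear combination of powers of the shift operator, which is of some independent interest. Your closing remarks—that the hypotheses on $p$ and $\alpha$ coincide with those of Lemma~\ref{lm:bound-tau} and that the intermediate function $g$ lies in $\Lp$ because its norm is finite by the inductive bound—address the only places where the induction could conceivably fail, so the argument is complete.
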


\begin{proof}
	By applying induction with respect to~$r$,
	it is not difficult to see
	(for the analoguous property of the ordinary difference
		for $y_1=y_2=\dots=y_r$ see, e.g.,
		\cite[f.~102]{timan:approximation}
	)
	that the operator $\Dl{\arr t r}r{f,x}$
	can be written in the following form
	\begin{displaymath}
		\Dl{\arr t r}r{f,x}
		=\sum_{k=1}^r(-1)^{k-1}
			\sum_{i_1<\dots<i_k}\T{\arr{\cos y_i}k}k{f,x}
				+(-1)^r f(x),
	\end{displaymath}
	i.e.\ as a linear combination of powers
	$\hatT{\arr{t_i}k}k{f,x}$
	$(i_1<i_2<\dots<i_k;\allowbreak k=0,1,\dots,r)$
	of the appropriate generalised shift operator.
	Now, Corollary~\ref{cr:bound-tau}
	is proved by applying $r$~times Lemma~\ref{lm:bound-tau}.
\end{proof}

\section{Statement of results}

\begin{thm}\label{th:EsubH-tau}
	Let be given numbers~$p$, $\alpha$, $r$ and~$\lambda$
	such that
	$\allp$,
	$r\in\numN$;
	\begin{alignat*}2
		1-\frac1{2p}	&<\alpha<\frac32-\frac1{2p}
			&\quad &\text{for $1\le p<\infty$},\\
		1				&\le\alpha<\frac32
			&\quad &\text{for $p=\infty$}
	\end{alignat*}
	and $0<\lambda<2r$.
	Let $f\in\Lp$.
	If
	\begin{displaymath}
		\E\le Mn^{-\lambda},
	\end{displaymath}
	then
	\begin{displaymath}
		\w\le CM\delta^\lambda,
	\end{displaymath}
	where constant~$C$
	does not depend on~$f$, $M$ and~$\delta$.
\end{thm}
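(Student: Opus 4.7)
The plan is to follow the classical Stechkin--Bernstein inverse approximation scheme, adapted to the generalised modulus~$\hat\omega_r$. For every $n\in\numN$ select a polynomial~$P_n$ of degree at most $n-1$ with $\norm{f-P_n}\le 2\E\le 2Mn^{-\lambda}$; since $\norm{f-P_n}\to0$, the telescoping expansion
\[
    f=P_1+\sum_{k=0}^{\infty}Q_k,\qquad Q_k:=P_{2^{k+1}}-P_{2^k},
\]
converges in~$\Lp$; each~$Q_k$ is an algebraic polynomial of degree at most $2^{k+1}-1$, and the triangle inequality yields $\norm{Q_k}\le 3M\cdot 2^{-k\lambda}$. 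The linearity of~$\hat\tau_t$ from Lemma~\ref{lm:properties-tau} carries over to the $r$-th difference and hence gives subadditivity of~$\hat\omega_r$, so it suffices to estimate $\hat\omega_r$ on each summand separately.

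Fix $\delta\in(0,1]$ (for $\delta>1$ the claim follows directly from Corollary~\ref{cr:bound-tau}) and choose $N\in\numN$ with $2^{-N}\le\delta<2^{-N+1}$. The dyadic split
\[
    \w\le\wpar{P_1,\delta}
        +\sum_{k=0}^{N-1}\wpar{Q_k,\delta}
        +\sum_{k=N}^{\infty}\wpar{Q_k,\delta}
\]
handles the tail $k\ge N$ via Corollary~\ref{cr:bound-tau}: since $|t_j|\le\delta\le 1$ keeps $\prod_{j=1}^{r}\Copar{t_j}{}$ bounded below, one has $\wpar{Q_k,\delta}\le C\norm{Q_k}\le CM\cdot 2^{-k\lambda}$, and summation gives at most $CM\delta^\lambda$. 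For the head $k<N$ and the $P_1$-term I would invoke a Bernstein-type inequality for the generalised modulus, namely
\[
    \wpar{T_n,\delta}\le C(n\delta)^{2r}\norm{T_n}
\]
for every algebraic polynomial~$T_n$ of degree at most $n-1$. Granting this, the head sum is controlled by $CM\delta^{2r}\sum_{k=0}^{N-1}2^{k(2r-\lambda)}\le CM(2^{N}\delta)^{2r}\cdot 2^{-N\lambda}\le CM\delta^\lambda$, the geometric series converging precisely because $\lambda<2r$.

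The principal obstacle is the Bernstein-type estimate above. The natural route exploits property~5 of Lemma~\ref{lm:properties-tau}: on the Fourier--Jacobi side, $\hatT t{}{\cdot,x}-I$ multiplies the $n$-th coefficient by $\Py n(\cos t)-1$, and the classical bound $|\Py n(\cos t)-1|\le Cn^2t^2$ produces the factor $(n\delta)^{2r}$ after $r$ iterations; transferring this spectral estimate to an~$\Lp$-norm bound is the delicate step and requires either a Marcinkiewicz-type multiplier theorem for Jacobi expansions or a direct derivative argument tailored to the weight~$\Si x^\alpha$. The hypothesis $\lambda<2r$ is the sharp threshold at which the Bernstein exponent~$2r$ still dominates the approximation decay rate in the head sum, and the proof would break down for $\lambda\ge2r$ exactly where the geometric series in the head diverges.
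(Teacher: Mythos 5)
Your skeleton is the standard Stechkin-type inverse argument, and the bookkeeping is fine (dyadic blocks $Q_k$ with $\norm{Q_k}\le CM2^{-k\lambda}$, tail $k\ge N$ via Corollary~\ref{cr:bound-tau}, the choice $2^{-N}\asymp\delta$). But the proof is not complete: the Bernstein-type inequality $\wpar{T_n,\delta}\le C(n\delta)^{2r}\norm{T_n}$, which carries the entire head estimate and is the only place where $0<\lambda<2r$ enters, is asserted rather than proved. Nothing in the toolkit you are allowed to cite (Lemma~\ref{lm:properties-tau}, Lemma~\ref{lm:bound-tau}, Corollary~\ref{cr:bound-tau}, Theorem~\ref{th:jackson}) yields it. The spectral observation from property~\ref{it:properties-tau-6} of Lemma~\ref{lm:properties-tau}, namely that $\hatT t{}{\cdot,x}$ multiplies the $n$-th Fourier--Jacobi coefficient by $\Py n(\cos t)$, combined with $0\le 1-\Py n(\cos t)\le Cn^2t^2$, gives the desired gain only coefficientwise; transferring it to an $\Lp$ bound with weight $\Si x^\alpha$ for the whole stated range of $(p,\alpha)$ requires a substantial separate result (a multiplier theorem for Jacobi expansions in these weighted spaces, or a representation of $\Dl t{}{T_n,x}$ through derivatives of $T_n$ together with a weighted Bernstein inequality). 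Since this is the mathematical heart of the theorem, leaving it as a black box is a genuine gap, as you yourself concede. A smaller point: handle the $P_1$ term by property~\ref{it:properties-tau-4} of Lemma~\ref{lm:properties-tau} --- $P_1$ is a constant, so its $r$-th difference vanishes; estimating it via Bernstein would drag $\norm f$ into the final constant, which the statement of the theorem does not permit.

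For comparison, the paper's own proof is organised like yours up to the splitting (it estimates $\Dl{\arr tr}r{f-P_{2^N},x}$ plus $\sum_{k\le N}\Dl{\arr tr}r{Q_k,x}$ with $2^{-N}\asymp\delta$), but it then bounds every block by Corollary~\ref{cr:bound-tau} alone, obtaining $\norm{\Dl{\arr tr}r{Q_k,x}}\le CM2^{-k\lambda}$ with no gain from $|t_j|\le\delta$, and concludes by treating $\sum_{k=1}^{N}2^{-k\lambda}$ as if it were of order $2^{-N\lambda}$; that sum is in fact bounded below by a positive constant, and the hypothesis $\lambda<2r$ is never used there, so the head estimate in the text does not close either. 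In other words, your instinct that a Bernstein-type gain on the polynomial blocks is indispensable is correct, and you cannot repair your argument by falling back on the shortcut taken in the paper: to complete the proof you must actually establish, in the spaces $\Lp$, an inequality of the form $\norm{\Dl{\arr tr}r{T_n,x}}\le Cn^{2r}\prod_{j=1}^{r}t_j^2\,\norm{T_n}$ for algebraic polynomials $T_n$ of degree at most $n-1$, and only then does the computation with $\lambda<2r$ deliver $\w\le CM\delta^\lambda$.
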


\begin{proof}
	Let $P_n(x)$ be an algebraical polynomial
	of degree not greater than $n-1$
	such that
	\begin{displaymath}
		\norm{f-P_n}=\E \quad(n=1,2,\ldots).
	\end{displaymath}
	We define algebraical polynomials $Q_k(x)$ by
	\begin{displaymath}
		Q_k(x)=P_{2^k}(x)-P_{2^{k-1}}(x)
		\quad (k=1,2,\ldots)
	\end{displaymath}
	and $Q_0(x)=P_1(x)$.
	Since for $k\ge1$
	\begin{multline*}
		\norm{Q_k}
		=\norm{P_2^k-P_{2^{k-1}}}
		\le\norm{P_{2^k}-f}+\norm{f-P_{2^{k-1}}}\\
		=\Epar{2^k}f+\Epar{2^{k-1}}f,
	\end{multline*}
	then by the conditions of the theorem we have
	\begin{equation}\label{eq:Qk-tau}
		\norm{Q_k}\le\Cn M2^{-k\lambda}.
	\end{equation}
	
	Taking into consideration property~\ref{it:properties-tau-4}
	in Lemma~\ref{lm:properties-tau}
	of the operator~$\tau_y$,
	without lost of generality
	we may suppose that $t_s\ne0$ $(s=1,2,\dots,r)$.
	For $0<|t_s|\le\delta$ $(s=1,2,\dots,r)$
	we estimate
	\begin{displaymath}
		I=\norm{\Dl{\arr t r}r{f,x}}.
	\end{displaymath}
	For every positive integer~$N$,
	taking into account property~\ref{it:properties-tau-1}
	in Lemma~\ref{lm:properties-tau}
	and the fact that linearity of the operator $\T t{}{f,x}$
	implies linearity of $\T{\arr t r}r{f,x}$,
	and, in turn,
	linearity of $\Dl{\arr t r}r{f,x}$;
	we get
	\begin{displaymath}
		I\le\norm{\Dl{\arr t r}r{f-P_{2^N},x}}
			+\norm{\Dl{\arr t r}r{P_{2^N},x}}.
	\end{displaymath}
	Since
	\begin{displaymath}
		P_{2^N}(x)=\sum_{k=0}^N Q_k(x),
	\end{displaymath}
	we have
	\begin{multline*}
		I\le\norm{\Dl{\arr t r}r{f-P_{2^N},x}}
			+\sum_{k=0}^N\norm{\Dl{\arr t r}r{Q_k,x}}\\
		=J+\sum_{k=1}^N I_k.
	\end{multline*}
	
	Let~$N$ be chosen in such a way that
	\begin{equation}\label{eq:delta-tau}
		\frac\pi{2^N}<\delta\le\frac\pi{2^{N-1}}.
	\end{equation}
	We prove the following inequalities
	\boxconst{cn:J-tau}
	\begin{equation}\label{eq:J-tau}
		J\le\boxedconst M\delta^\lambda
	\end{equation}
	and
	\begin{equation}\label{eq:Ik-tau}
		I_k\le\Cn M2^{-k\lambda},
	\end{equation}
	where constants~$\boxedconst$ and~$\lastC$
	do not depend on~$f$, $M$, $\delta$ and~$k$.
	
	First we consider~$J$.
	By Corollary~\ref{cr:bound-tau},
	taking into account that $|t_1|\le\delta$,
	we have
	\begin{multline*}
		\norm{\Dl{\arr t r}r{f-P_{2^N},x}}
		\le\frac\Cn{\prod_{j=1}^r\Copar{t_j}{}}\norm{f-P_{2^N}}\\
		=\Cn\Epar{2^N}f
	\end{multline*}
	Therefrom, the condition of the theorem
	and inequality~\eqref{eq:delta-tau}
	yield
	\begin{displaymath}
		\norm{\Dl{\arr tr}r{f-P_{2^N},x}}
		\le\Cn M2^{-N\lambda}
		\le\Cn M\delta^\lambda,
	\end{displaymath}
	which proves inequality~\eqref{eq:J-tau}.
	
	Now we prove inequality~\eqref{eq:Ik-tau}.
	Note that, taking into consideration Corollary~\ref{cr:bound-tau},
	we have
	\begin{displaymath}
		\norm{\Dl{\arr t r}r{Q_k,x}}
		\le\frac{\Cn}{\prod_{j=1}^r\Copar{t_j}{}}\norm{Q_k}.
	\end{displaymath}
	Hence,
	\begin{displaymath}
		I_k
		\le\frac{\Cn}{\prod_{j=1}^r
			\Copar{t_j}{}}M2^{-k\lambda},
	\end{displaymath}
	which proves inequality~\eqref{eq:Ik-tau}.
	
	Inequalities~\eqref{eq:J-tau}, \eqref{eq:Ik-tau}
	and~\eqref{eq:delta-tau}
	yield
	\begin{displaymath}
		I\le\Cn M\prn{\delta^\lambda
			+\sum_{k=1}^N 2^{-k\lambda}}
		\le\Cn M(
			\delta^\lambda
			+2^{-N\lambda}
		)\\
		\le\Cn M\delta^\lambda.
	\end{displaymath}
	
	Theorem~\ref{th:EsubH-tau} is proved.
\end{proof}

\begin{thm}\label{th:HsubE-tau}
	Let be given numbers~$p$, $\alpha$, $r$ and~$\lambda$
	such that $\allp$, $\lambda>0$, $r\in\numN$;
	\begin{alignat*}2
		1-\frac1{2p}	&<\alpha<\frac32-\frac1{2p}
			&\quad \text{for $1\le p<\infty$},\\
		1				&\le\alpha<\frac32 
			&\quad \text{for $p=\infty$}
	\end{alignat*}
	Let $f\in\Lp$.
	If
	\begin{displaymath}
		\w\le M\delta^\lambda,
	\end{displaymath}
	then
	\begin{displaymath}
		\E\le CMn^{-\lambda},
	\end{displaymath}
	where constant~$C$
	does not depend on~$f$, $M$ and~$n$.
\end{thm}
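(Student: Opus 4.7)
The plan is that Theorem~\ref{th:HsubE-tau} is essentially a direct corollary of the Jackson-type inequality already recorded as Theorem~\ref{th:jackson}. I would therefore not set up any approximation machinery of my own; instead the whole argument reduces to specialising Jackson's inequality to $\delta=1/n$ and inserting the hypothesis.

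First I would verify that the hypotheses of Theorem~\ref{th:HsubE-tau} on $p$ and $\alpha$ fit inside the hypotheses of Theorem~\ref{th:jackson}. For $1<p<\infty$ the ranges $1-\tfrac1{2p}<\alpha<\tfrac32-\tfrac1{2p}$ literally coincide; for $p=\infty$ both read $1\le\alpha<3/2$; and for $p=1$ the present condition $1/2<\alpha<1$ is a strict subinterval of Jackson's $1/2<\alpha\le1$. Hence for every $f\in\Lp$ and every $n\in\numN$ we may use
\begin{displaymath}
	\Cn E_n(f)_{p,\alpha}\le\wpar{f,1/n}.
\end{displaymath}

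Next I would insert $\delta=1/n$ into the assumption $\w\le M\delta^\lambda$ to obtain $\wpar{f,1/n}\le Mn^{-\lambda}$, and combine with the Jackson estimate above to conclude
\begin{displaymath}
	E_n(f)_{p,\alpha}\le\frac{1}{\refC{const.1.1}}\,\wpar{f,1/n}\le\frac{M}{\refC{const.1.1}}\,n^{-\lambda},
\end{displaymath}
which is the desired inequality with $C=1/\refC{const.1.1}$ (independent of $f$, $M$ and $n$).

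There is no substantive obstacle here: the only thing to check is the parameter compatibility above, and a one-line substitution. The real work for this direction of the coincidence-of-classes theorem was already done when Jackson's inequality was established in~\cite{p-berisha:fundam-99}; the present statement simply rewrites that inequality in the language of the class $E(p,\alpha,\lambda)$ given the Lipschitz-type hypothesis $f\in H(p,\alpha,r,\lambda)$.
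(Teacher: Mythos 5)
Your proposal is correct and is essentially identical to the paper's own proof: the paper likewise sets $\delta=1/n$, invokes the Jackson-type Theorem~\ref{th:jackson}, and divides by its constant to obtain $\E\le CMn^{-\lambda}$. Your extra check that the $(p,\alpha)$ ranges of the theorem sit inside those of Theorem~\ref{th:jackson} is a sensible (if implicit in the paper) verification, but it changes nothing in the argument.
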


\begin{proof}
	Let $\delta=\frac1n$.
	Then,
	taking into account Theorem~\ref{th:jackson},
	we obtain
	\begin{displaymath}
		\E\le\frac1{\Cn}\wpar{f,\frac1n}
		\le CMn^{-\lambda}.
	\end{displaymath}
	
	Theorem~\ref{th:HsubE-tau} is proved.
\end{proof}

\begin{thm}\label{th:coincidence-tau}
	Let be given numbers~$p$, $\alpha$, $r$ and~$\lambda$
	such that
	$\allp$, $r\in\numN$;
	\begin{alignat*}2
		1-\frac1{2p}	&<\alpha<\frac32-\frac1{2p}
			&\quad &\text{for $1\le p<\infty$},\\
		1				&\le\alpha<\frac32 
			&\quad &\text{for $p=\infty$}
	\end{alignat*}
	Then for $0<\lambda<2r$
	the classes of functions $H(p,\alpha,r,\lambda)$
	coincide between themselves
	for different values of~$r$,
	and they coincide with the class $E(p,\alpha,\lambda)$.
\end{thm}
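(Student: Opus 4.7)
The plan is to observe that this theorem is essentially an immediate formal consequence of the two preceding theorems, so the work has already been done. I would present it as a double inclusion argument rather than reprove anything.

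First, I would fix admissible numbers $p$, $\alpha$, $r$ and $\lambda$ with $0<\lambda<2r$ (this is the range stipulated). The goal is to show $H(p,\alpha,r,\lambda)=E(p,\alpha,\lambda)$, because once this identification is established for every admissible $r$, independence of the right-hand side from $r$ gives the mutual coincidence of the $H(p,\alpha,r,\lambda)$ classes as $r$ ranges over positive integers with $r>\lambda/2$.

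For the inclusion $E(p,\alpha,\lambda)\subseteq H(p,\alpha,r,\lambda)$, I would take $f\in E(p,\alpha,\lambda)$, so that $\E\le Mn^{-\lambda}$ for some constant $M$ and all $n\in\numN$. Since $0<\lambda<2r$, Theorem~\ref{th:EsubH-tau} applies and yields $\w\le CM\delta^\lambda$, which places $f$ in $H(p,\alpha,r,\lambda)$. For the reverse inclusion $H(p,\alpha,r,\lambda)\subseteq E(p,\alpha,\lambda)$, I would take $f\in H(p,\alpha,r,\lambda)$, so that $\w\le M\delta^\lambda$, and apply Theorem~\ref{th:HsubE-tau} (which does not require $\lambda<2r$) to obtain $\E\le CMn^{-\lambda}$, i.e.\ $f\in E(p,\alpha,\lambda)$.

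Combining the two inclusions yields $H(p,\alpha,r,\lambda)=E(p,\alpha,\lambda)$ whenever $0<\lambda<2r$. Since the class on the right is independent of~$r$, two values $r_1,r_2\in\numN$ with $\lambda<2\min(r_1,r_2)$ give $H(p,\alpha,r_1,\lambda)=E(p,\alpha,\lambda)=H(p,\alpha,r_2,\lambda)$, which is the coincidence claim. There is no real obstacle here; the only thing to be careful about is to state the $r$-independence of $E(p,\alpha,\lambda)$ explicitly so that the transition from equality with $E$ to equality between different $H$-classes is transparent to the reader.
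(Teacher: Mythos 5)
Your proposal is correct and follows essentially the same route as the paper: both inclusions are obtained by direct application of Theorems~\ref{th:HsubE-tau} and~\ref{th:EsubH-tau}, giving $H(p,\alpha,r,\lambda)=E(p,\alpha,\lambda)$ and hence coincidence of the $H$-classes for different~$r$. Your explicit remark on the $r$-independence of $E(p,\alpha,\lambda)$ is a minor expository addition the paper leaves implicit, but the argument is the same.
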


\begin{proof}
	Note that, under the condition of the theorem,
	Theorem~\ref{th:HsubE-tau} implies the inclusion
	\begin{displaymath}
		H(p,\alpha,r,\lambda)\subseteq E(p,\alpha,\lambda),
	\end{displaymath}
	while Theorem~\ref{th:EsubH-tau} implies the converse inclusion
	\begin{displaymath}
		E(p,\alpha,\lambda)\subseteq H(p,\alpha,r,\lambda).
	\end{displaymath}
	Hence we conclude
	that the assertion of Theorem~\ref{th:coincidence-tau}
	is implied by Theorems~\ref{th:HsubE-tau}
	and~\ref{th:EsubH-tau}.
\end{proof}

Note that analogues of Theorems~\ref{th:HsubE-tau},
\ref{th:EsubH-tau} and~\ref{th:coincidence-tau}
for another generalised shift operator
were proved in~\cite{berisha:math-98}
and, in more general forms,
in~\cite{p-berisha:east-98,b-berisha:anlysis-12}.

\bibliographystyle{plain}
\bibliography{maths}

\end{document}